\newtheorem{theorem}{Theorem}
\newtheorem{lemma}[theorem]{Lemma}
\newtheorem{corollary}[theorem]{Corollary}
\newtheorem{problem}[theorem]{Problem}
\theoremstyle{definition}
\theoremstyle{remark}
\def\V{\mathcal{V}}
\def\A{\mathcal{A}}
\newcounter{casenum}[theorem]
\newcounter{subcasenum}[theorem]
\newcounter{claimnum}[theorem]
\begin{document}
\thispagestyle{plain}

\begin{center} {\Large On $k$-idempotent 0-1 matrices
}
\end{center}
\pagestyle{plain}
\begin{center}
{
  {\small   Zejun Huang$^{a}$, Huiqiu Lin$^{b,}$\footnote{Corresponding author.\\ Email: mathzejun@gmail.com (Huang); huiqiulin@126.com (Lin)}\\[3mm]
  { {\small $^{a}$College of Mathematics and Statistics, Shenzhen University, Shenzhen 518060, China }\\
  {\small $^{b}$Department of Mathematics, East China University of Science and Technology, Shanghai 200237,   China }}\\

}
}
\end{center}

\begin{center}

\begin{minipage}{140mm}
\begin{center}
{\bf Abstract}
\end{center}
{\small  Let  $k\ge 2$ be an integer. If a square 0-1 matrix $A$ satisfies $A^k=A$, then  $A$ is said to be   $k$-idempotent. In this paper, we give a characterization of  $k$-idempotent 0-1 matrices. We also determine the maximum number of nonzero entries in $k$-idempotent 0-1 matrices of a given  order as well as the $k$-idempotent 0-1 matrices attaining this maximum number.

{\bf Keywords:}  idempotent, $k$-idempotent matrix, nonnegative matrix,  0-1 matrix }

{\bf Mathematics Subject Classification:} 05C20,   05C50, 15A99
\end{minipage}
\end{center}

\section{Introduction}
 Nonnegative matrices play an important role in combinatorial matrix theory. Lots of interesting results on  combinatorial properties of powers of nonnegative matrices have been discovered. One of the most famous results on nonnegative matrices is the Perron-Frobenius Theorem, which states that the spectral radius of a square nonnegative matrix is one of its eigenvalue. A {\it primitive} matrix is a nonnegative matrix who has exactly one eigenvalue with modulus equal to its spectral radius. Frobenius found the connection between the primitivity and the powers of nonnegative matrices. He proved that a nonnegative matrix $A$ is primitive if and only if there is a positive integer $k$ such that $A^k$ is a positive matrix  (see \cite{XZ}, p.134). By this, given any primitive matrix $A$, there always exists a  smallest integer $k$ such that $A^k$ is positive, which  is called the {\it exponent} of $A$. Wielandt \cite{HW} proved that the exponent of an $n\times n$ primitive matrix is less than or equal to $(n-1)^2+1$. Dulmage, Mendelsohn Lewin, Vitek, Shao and Zhang \cite{DM, ML, LV,JS1,KZ} determined all the possible exponents of primitive matrices of order $n$.

Brualdi and Lewin \cite{BL} investigated the structure of powers of nonnegative matrices and  characterized the nonnegative matrices for which some power is   triangular.
Heap and Lynn \cite{HeL1,HeL2} studied {\it periods} and the {\it indices of convergence} of  nonnegative matrices, which are defined based on the zero patterns of powers of nonnegative matrices and have connections with   stochastic theory.

Among all nonnegative matrices, 0-1 matrices are very special, since they have close relationships with graph theory. Combinatorial properties of powers of 0-1 matrices can represent certain properties of digraphs, which have attracted lots of attentions. Huang, Lyu, Qiao, Wu and Zhan \cite{HL1,HL3,HZ1,HZ2} investigated 0-1 matrices whose powers are also 0-1 matrices. Huang and Lyu \cite{HL2} studied 0-1 matrices whose powers have  bounded entries. Chen, Huang and Yan \cite{CHY} studied the {\it stable index} of 0-1 matrices, which is defined based on the properties of the powers of 0-1 matrices.

Let $A$ be a square 0-1 matrix and let $k$ be a positive integer. If $A^2=A$, then $A$ is said to be {\it idempotent}; if $A^k=A$, then $A$ is said to be {\it $k$-idempotent}. Note that an idempotent matrix is $k$-idempotent for every positive integer $k\ge 2$. Ma \cite{CM} characterized the idempotent 0-1 matrices.  We solve the following problem in this paper.

\begin{problem}
Let $n$ and $k$ be positive integers. Characterize the $k$-idempotent 0-1 matrices of order $n$.
Determine the maximum number of nonzero entries in $k$-idempotent 0-1 matrices of order $n$ as well as the $k$-idempotent 0-1 matrices attaining the maximum number.
\end{problem}

\section{Structures of $k$-idempotent 0-1 matrices}
\setcounter{theorem}{0}

We need some terminology on digraphs.
Let $D$ be a digraph. We denote by $\V(D)$ the vertex set and   $\A(D)$ the arc set of $D$. The arc from  vertex $u$ to vertex $v$ is denoted by  $(u,v)$ or $uv$. If $(u,v)\in \A(D)$, then $u$ is  an {\it in-neighbour} of $v$ and $v$ is an {\it out-neighbour} of $u$. A   {\it directed walk}  is sequence of
consecutive arcs $(v_1,v_2),(v_2,v_3),\ldots,(v_{t-1},v_t)$, which is  written as $v_1v_2\cdots v_t$. A {\it directed cycle}   is a closed walk $v_1v_2\cdots v_tv_1$ with $v_1,v_2\ldots,v_t$ being distinct.  A {\it directed path}   is a directed walk in which all the vertices are distinct. Directed walk, directed path and directed cycle will be abbreviated as walk, path and cycle in this paper. The number of arcs in a  walk (cycle) is called its {\it length}. A  walk (cycle) of length $k$ is called a {\it $k$-walk ($k$-cycle)}. If a digraph $D$ contins a walk from $u$ to $v$ for all  $u,v\in\V(D)$, then $D$ is said to be {\it strongly connected}.

Given a digraph $D=(\mathcal{V},\mathcal{A})$ with vertex set $\mathcal{V}=\{v_1,v_2,\ldots,v_n\}$ and arc set $\mathcal{A}$, its {\it adjacency matrix} $A_D=(a_{ij})$ is defined by
\begin{equation*}
a_{ij}=\left\{\begin{array}{ll}
1,&\textrm{if } (v_i,v_j)\in \mathcal{A};\\
0,&\textrm{otherwise}.\end{array}\right.
\end{equation*}

 The set of 0-1 matrices of order $n$ is denoted by  $M_n\{0,1\}$.
 Let $A=(a_{ij})\in M_n\{0,1\}$.  We   define its digraph as $D(A)=(\mathcal{V},\mathcal{A})$  with $\mathcal{V}=\{1,2,\ldots,n\}$ and  $\mathcal{A}=\{(i,j): a_{ij}=1, 1\le i,j\le n\}$.

 Denote by $A(i,j)$ the $(i,j)$-entry of $A$. Then for $A\in M_n\{0,1\}$, $A^k(i,j)=t$ if and only if $D(A)$ has exactly $t$ distinct walks of length $k$ from $i$ to $j$.
 Therefore, if $A^k=A$, then $D(A)$ has an arc $(i,j)$ if and only if it has a $k$-walk from $i$ to $j$.

   A nonnegative square matrix $A$ is called {\it reducible}, if it is permutation similar to a matrix of the form
  $$\left(\begin{array}{cc}
B&C\\0&D
\end{array}
\right), $$ where $B$ and $D$ are square matrices. If $A$ is not reducible, then it is called {\it irreducible}.  For convenience, a 1-by-1 matrix is always called irreducible. A square matrix A is irreducible if and only if its digraph $D(A)$ is strongly connected (see \cite{XZ}, p.131).

Denote by $r_i(A)$ the $i$-th row sum and $\rho(A)$ the spectral radius  of a matrix $A$. We have the following.
\begin{lemma}{\rm\cite{Minc}}\label{Minc} Let $A$ be an $n\times n$ nonnegative matrix. Then
$$\min_{1\leq i\leq n}r_i(A)\leq \rho(A)\leq \max_{1\leq i\leq n}r_i(A).$$
Moreover, if $A$ is irreducible, then one of equalities holds if and
only if $r_1(A)=r_2(A)=\cdots=r_n(A).$
\end{lemma}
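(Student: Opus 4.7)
The plan is to derive both the inequalities and the equality characterization from a right nonnegative eigenvector of $A$ associated with $\rho(A)$. By the Perron-Frobenius theorem, such a nonnegative eigenvector $x\neq 0$ exists, and if $A$ is irreducible, $x$ may be chosen entrywise positive.

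For the inequalities, I would pick indices where $x$ attains its maximum and minimum. Writing $x_k=\max_i x_i$, observe
$$\rho(A)\,x_k=(Ax)_k=\sum_{j=1}^n a_{kj}x_j\le x_k\sum_{j=1}^n a_{kj}=x_k\, r_k(A)\le x_k\max_{1\le i\le n} r_i(A),$$
and dividing by $x_k>0$ yields the upper bound. The lower bound is obtained symmetrically from $x_\ell=\min_i x_i$, provided $x_\ell>0$. For a reducible $A$, where the minimum component may vanish, I would use a continuity argument: replace $A$ by $A+\varepsilon J$ with $J$ the all-ones matrix (this is irreducible, so has a positive Perron vector), apply the irreducible case, and let $\varepsilon\to 0^+$, invoking continuity of $\rho(\cdot)$ and of the row sums in $\varepsilon$.

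For the equality clause, assume $A$ is irreducible, so $x>0$. If $\rho(A)=\max_i r_i(A)$ and $k$ maximizes $x$, the chain above is forced to be tight. Tightness of $\sum_j a_{kj}x_j=x_k\,r_k(A)$ gives $a_{kj}(x_k-x_j)=0$ for all $j$, hence $x_j=x_k$ whenever $(k,j)\in\A(D(A))$. Since $D(A)$ is strongly connected, iterating this conclusion along directed walks emanating from $k$ propagates $x_j=x_k$ to every vertex, so $x$ is a constant vector; then $Ax=\rho(A)x$ forces every row sum to equal $\rho(A)$. Conversely, if all row sums equal a common value $r$, the all-ones vector is an eigenvector with eigenvalue $r$, and the inequalities already established pin down $\rho(A)=r$. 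The lower-bound case is handled symmetrically, this time arguing at a minimizer of $x$.

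The only delicate point I anticipate is the reducible case of the lower bound, where the Perron eigenvector can have zero coordinates and the estimate at the minimum index collapses; the perturbation trick with $A+\varepsilon J$ bypasses this cleanly. The propagation step along strong connectivity in the equality analysis is routine once the first equality has been extracted from the tight chain.
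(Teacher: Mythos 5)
The paper does not prove this lemma at all---it is quoted verbatim from Minc's \emph{Nonnegative Matrices} as a known result---so there is no internal proof to compare against. Your argument is the standard one and is correct: the Perron eigenvector estimate at a maximizing (resp.\ minimizing) component gives the two bounds, the $A+\varepsilon J$ perturbation correctly patches the reducible lower bound where the eigenvector may vanish somewhere, and the equality analysis propagates constancy of $x$ along arcs using strong connectivity (the one point worth making explicit is that each out-neighbour reached is again a maximizer of $x$, so the tight chain re-applies there).
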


Denote by $\overrightarrow{C}_k$ the directed cycle with $k$ vertices and $C_k$ the adjacency matrix of  $\overrightarrow{C}_k$, which is the $k\times k$ basic circulant matrix
 $$\left(\begin{array}{ccccc}
  &1& & \\
  & &\ddots& \\
    &&&1\\
 1&&&\end{array}
 \right).$$
From the above lemma, we deduce the following result immediately.
\begin{lemma}\label{le1}
Let $A$ be an irreducible 0-1 matrix of order $n$. If $\rho(A)=1$, then $A$ is permutation similar to $C_n$; if $\rho(A)=0$, then $n=1$ and $A=0$.
\end{lemma}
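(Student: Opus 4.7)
The plan is to dispatch the two cases separately, using Lemma \ref{Minc} together with the graph-theoretic interpretation of irreducibility as strong connectivity of $D(A)$.

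For $\rho(A)=0$, I would argue that if $n\ge 2$, then strong connectivity of $D(A)$ forces every vertex to have at least one out-neighbour, so every row sum satisfies $r_i(A)\ge 1$. Lemma \ref{Minc} then yields $\rho(A)\ge 1$, a contradiction. Hence $n=1$, and since the unique entry lies in $\{0,1\}$ with spectral radius $0$, we conclude $A=(0)$.

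For $\rho(A)=1$, when $n\ge 2$ the same strong-connectivity observation gives $\min_i r_i(A)\ge 1=\rho(A)$, so the lower inequality in Lemma \ref{Minc} is attained. The ``moreover'' clause then forces every row sum to equal $1$. Applying the identical argument to $A^T$, which is also irreducible with spectral radius $1$, shows that every column sum of $A$ equals $1$ as well, so $A$ is a permutation matrix. The digraph of a permutation matrix decomposes into vertex-disjoint directed cycles; strong connectivity of $D(A)$ forces this decomposition to reduce to a single $n$-cycle, i.e., $D(A)\cong\overrightarrow{C}_n$. Relabeling the vertices along this cycle exhibits $A$ as permutation similar to $C_n$; the case $n=1$ is immediate.

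I expect the one place requiring genuine care to be the twofold application of Lemma \ref{Minc}, both to turn $\min_i r_i(A)=\rho(A)$ into uniform row sums and to transfer the conclusion to columns via $A^T$; once $A$ is reduced to a permutation matrix, the combinatorial step from ``irreducible permutation matrix'' to ``$n$-cycle'' is routine.
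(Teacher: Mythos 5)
Your argument is correct and is exactly the route the paper intends: the paper gives no written proof, merely asserting that the lemma follows "immediately" from Lemma \ref{Minc}, and your write-up supplies precisely that deduction (strong connectivity forces $\min_i r_i(A)\ge 1$, the equality clause of Lemma \ref{Minc} then pins down all row and column sums, and irreducibility collapses the resulting permutation digraph to a single $n$-cycle). No gaps; the $n=1$ base case and the passage to $A^T$ are both handled properly.
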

\begin{lemma}\label{le2}
Let $k\ge 2$ be an integer, and let $A$ be a strictly upper triangular 0-1 matrix of order $n\ge 2$. If $A^k=A$, then $A=0$.
\begin{proof}
To the contrary, suppose $A\ne 0$. Then $D(A)$ contains an arc $(u_1,v_1)$, which implies that $D(A)$ contains a $k$-walk $W_1\equiv u_1u_2u_3\cdots u_{k}v_1$ from $u_1$ to $v_1$. Since $A$ is strictly upper triangular, $D(A)$ is acyclic. Therefore, the vertices $u_1,u_2,u_3,\ldots,u_k,v_1$ are distinct and $W_1$ is a path. Now $D(A)$ containing the arc $(u_1,u_2)$ implies that it contains another $k$-path $W_2$ from $u_1$ to $u_2$.  Combining $W_1$ and $W_2$ we get a $(2k-1)$-path from $u_1$ to $v_1$. Now if $2k-1>n$, then we get a contradiction since $D(A)$ has only $n$ vertices. If $2k-1\le n$,   repeating the above arguments we can always find a path with length larger than $n$ in $D(A)$, a contradiction.
\end{proof}
\end{lemma}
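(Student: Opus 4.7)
The statement says a strictly upper triangular $0$-$1$ matrix $A$ of order $n \ge 2$ satisfying $A^k = A$ must be zero. The cleanest approach I would take exploits the nilpotency that comes for free from the strictly upper triangular hypothesis: any such $A$ satisfies $A^n = 0$, and hence $A^j = 0$ for every $j \ge n$.

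The plan is then a one-line iteration. Starting from $A = A^k$, I would apply the relation to itself: $A = A^k = (A^k)^k = A^{k^2}$, and by a straightforward induction on $m$, $A = A^{k^m}$ for every positive integer $m$. Since $k \ge 2$, I can choose $m$ large enough that $k^m \ge n$. For that $m$, the right-hand side $A^{k^m}$ equals $0$ by nilpotency, so $A = 0$.

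An alternative, more combinatorial route mirrors the spirit of the rest of Section 2: work in $D(A)$. Because $A$ is strictly upper triangular, $D(A)$ is acyclic, so every walk is a path and in particular any arc $(u,v)$ forces $u < v$. If $A \ne 0$, pick an arc $(u,v)$; the equation $A^k = A$ gives a $k$-walk (hence a $k$-path) from $u$ to $v$. Replacing the first arc $(u,u')$ of that path again by a $k$-path from $u$ to $u'$ yields a $(2k-1)$-path from $u$ to $v$, and repeatedly splicing produces arbitrarily long paths in $D(A)$, contradicting $|\V(D(A))| = n$.

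The main (minor) obstacle is just deciding which of these two presentations to adopt. The nilpotency argument is shorter and avoids any digraph machinery, while the walk-splicing argument fits the combinatorial language already set up in the paper and generalizes the intuition needed later in Section 2. I would present the nilpotency argument, since it is essentially immediate and self-contained, and only the verification $A^n = 0$ for strictly upper triangular $A$ is needed, which is standard.
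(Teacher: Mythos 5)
Your proposal is correct, and your preferred argument is genuinely different from the paper's. The paper proves the lemma combinatorially: it takes an arc $(u_1,v_1)$ of $D(A)$, uses $A^k=A$ to replace it by a $k$-walk, observes that acyclicity forces every walk to be a path, and then repeatedly splices $k$-paths in place of single arcs to manufacture paths of length exceeding $n$, a contradiction — exactly your second, ``more combinatorial route.'' Your primary argument instead uses the algebraic fact that a strictly upper triangular matrix of order $n$ is nilpotent with $A^n=0$, combines it with the iterate $A=A^{k^m}$ (valid for all $m\ge 1$ by induction from $A^k=A$), and chooses $m$ with $k^m\ge n$ to conclude $A=A^{k^m}=0$. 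This is shorter, fully rigorous, and sidesteps the slightly informal ``repeating the above arguments'' step in the paper's proof; what the paper's version buys is consistency with the digraph language used throughout Section~2 and a concrete picture (arbitrarily long paths in an acyclic digraph on $n$ vertices) that foreshadows the walk-counting arguments in Theorem~\ref{th3}. Either presentation is acceptable; the nilpotency proof is arguably the cleaner one for this isolated lemma.
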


 Now we are ready to present the characterization of $k$-idempotent 0-1 matrices.
\begin{theorem}\label{th3}
A square 0-1 matrix $A$ is $k$-idempotent if and only if $A=0$ or $A$ is  permutation similar to
$$\left(\begin{array}{ccc}
0&X&XP^TY\\0&P&Y\\0&0&0
\end{array}
\right), $$
where the diagonal zero blocks are square and may vanish, $P=\oplus_{i=1}^r C_{n_i}$ with  $n_i \mid k-1$ for $i=1,\ldots,r$, $X$ and $Y$ are 0-1 matrices such that $XP^TY$ is also a 0-1 matrix.
\end{theorem}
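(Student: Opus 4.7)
For sufficiency, I plan to verify $A^k = A$ by direct block multiplication. Induction on $m \ge 1$ should yield
$$A^m = \left(\begin{array}{ccc} 0 & XP^{m-1} & XP^{m-2}Y \\ 0 & P^m & P^{m-1}Y \\ 0 & 0 & 0 \end{array}\right),$$
using $P^{-1} = P^T$ (since $P = \oplus_{i=1}^{r} C_{n_i}$ is a permutation matrix) when $m=1$. The hypothesis $n_i \mid k-1$ gives $C_{n_i}^{k-1} = I$ for every $i$, so $P^{k-1} = I$, $P^k = P$, and $P^{k-2} = P^T$; substituting $m = k$ then recovers $A$.

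For necessity, suppose $A \ne 0$ satisfies $A^k = A$. The first step I would take is to put $A$ in Frobenius normal form; because the block upper-triangular structure is preserved under taking powers, each irreducible diagonal block $A_i$ satisfies $A_i^k = A_i$, which forces $\rho(A_i)^k = \rho(A_i)$ and hence $\rho(A_i) \in \{0,1\}$. Lemma~\ref{le1} then identifies $A_i$ as either the $1\times 1$ zero matrix (a \emph{null} block) or, after permutation, the basic cycle $C_{n_i}$ (a \emph{cyclic} block); in the cyclic case $C_{n_i}^k = C_{n_i}$ forces $n_i \mid k-1$.

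The core of the argument will be two structural claims about the digraph $D(A)$, both of which I would establish by walk-counting contradictions to $A^k(\cdot,\cdot) \le 1$. \emph{Claim A:} no arcs exist between vertices of distinct cyclic components; given $u \to w$ with $u \in C_i$, $w \in C_j$, $i \ne j$, the plan is to exhibit the two length-$k$ walks ``around $C_i$, then $u \to w$'' and ``$u \to w$, then around $C_j$''. \emph{Claim B:} every non-cyclic vertex is a source (no in-arcs) or a sink (no out-arcs). Writing $V_M$ for the set of non-cyclic vertices with both an in-arc and an out-arc, I would first rule out cyclic in- or out-neighbors of any $v \in V_M$: in the case $u \to v \to w$ with $w$ cyclic on a cycle of length $n_w$, the walk $u \to v \to w \to w_1 \to \cdots$ of length $k$ ends at the cycle-predecessor $w^-$ of $w$ (since $k-2 \equiv -1 \pmod{n_w}$), forcing the arc $u \to w^-$; then $u \to w^- \to w \to w_1 \to \cdots \to w^-$ is a second length-$k$ walk from $u$ to $w^-$, contradicting $A^k(u,w^-) \le 1$. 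The symmetric case of a cyclic in-neighbor is handled by applying this reasoning to $A^T$. Then I would pick $v \in V_M$ minimal under reachability in $D(A)$; each in-neighbor $u$ of $v$ lies outside $V_M$ (by minimality) and is non-cyclic (by the previous step), so, having the out-arc $u \to v$, must be a source. But any length-$k$ walk $u = x_0 \to x_1 \to \cdots \to x_k = v$ would have $x_{k-1}$ an in-neighbor of $v$ and hence also a source, so the arc $x_{k-2} \to x_{k-1}$ (present because $k \ge 2$) would be an in-arc into a source---impossible. Thus $A^k(u,v) = 0 \ne 1 = A(u,v)$, forcing $V_M = \emptyset$. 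I expect the cyclic-neighbor exclusion and this minimality argument to be the main technical hurdles.

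Once Claims A and B are in place, I would permute rows and columns to place non-cyclic sources first, cyclic vertices (now forming the block-diagonal $P = \oplus_{i=1}^{r} C_{n_i}$ by Claim A) in the middle, and non-cyclic sinks last, producing
$$A = \left(\begin{array}{ccc} 0 & X & Z \\ 0 & P & Y \\ 0 & 0 & 0 \end{array}\right).$$
The sufficiency computation at $m = k$ then gives $A^k$ with $(1,3)$-block $XP^{k-2}Y = XP^T Y$; equating with $A$ forces $Z = XP^T Y$, which completes the characterization.
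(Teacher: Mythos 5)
Your proposal is correct, and its skeleton (sufficiency by block multiplication and the identities $P^{k-1}=I$, $P^{k-2}=P^T$; necessity via the Frobenius normal form, the spectral-radius argument $\rho(A_i)^k=\rho(A_i)$, Lemma~\ref{le1}, and $n_i\mid k-1$; finally forcing $Z=XP^TY$ by computing the $(1,3)$ block of $A^k$) matches the paper. Where you genuinely diverge is the structural step in the middle. The paper proves a single claim that there is \emph{no path} between two distinct cyclic components, by repeatedly pulling arcs backward along a hypothetical path and then exhibiting two $k$-walks through the two cycles; from this it passes directly to the three-block form, implicitly also needing that the non-cyclic part carries no ``chains'' (this is what its Lemma~\ref{le2} is evidently there for, though it is not explicitly invoked). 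You instead prove the weaker Claim~A (no \emph{arcs} between distinct cyclic components, by the same two-walk count) together with your Claim~B, that every non-cyclic vertex is a source or a sink, established by first excluding cyclic in- or out-neighbours of a ``middle'' vertex via the $w^-$ walk-counting argument and then taking a reachability-minimal element of $V_M$ (reachability is antisymmetric on non-cyclic vertices, so such an element exists) and showing no $k$-walk can realize its in-arc. Claims A and B together are exactly equivalent to the block form with zero $(1,1)$ and $(3,3)$ blocks, so the weaker Claim~A suffices in your architecture: once every non-cyclic vertex is a source or a sink, a path leaving a cycle dies immediately at a sink, so longer paths between cycles are excluded for free. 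Your route buys a more self-contained and fully explicit justification of the passage to the form $H$ (it makes no appeal to an acyclicity lemma), at the cost of a slightly longer case analysis; the paper's route is shorter on the page but leans on Lemma~\ref{le2}-type reasoning that it leaves tacit. Both are sound, and your verification of sufficiency coincides with the paper's computation of $H^m$.
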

\begin{proof}
Let $A\in M_n\{0,1\}$ be $k$-idempotent. Then $A$ is permutation similar to an upper triangular block matrix
$$B=\left(\begin{array}{cccc}
A_1&&* \\&\ddots&\\0&&A_s
\end{array}
\right), $$
where each $A_i$ is an $n_i\times n_i$ irreducible matrix for $i=1,\ldots,s$.

Since $A^k=A$, we have
$$A_i^k=A_i\quad {\rm for}\quad i=1,\ldots,s.$$
It follows that $$\rho^k(A_i)=\rho(A_i^k)=\rho(A_i), $$which implies
$$\rho(A_i)=1~ {\rm or} ~0\quad {\rm for}\quad i=1,\ldots,s.$$
Applying Lemma \ref{le1} on each $i\in \{1,\ldots,s\}$, if $\rho(A_i)=0$, then $n_i=1$ and $A_i=0$;
if $\rho(A_i)=1$, then $A_i$ is permutation similar to $C_{n_i}$ with $n_i\mid k-1$, since it easy to check that $C_{n_i}^k\ne C_{n_i}$ when $n_i$ does not divide $k-1$.

Therefore, without loss of generality, we may assume $A_i=C_{n_i}$ if $A_i\ne 0$.
Now we prove the following claim.\\

{\bf Claim 1.} {\it Suppose $A_i=C_{n_i}, A_j=C_{n_j}$ with $i,j\in \{1,\ldots,s\}$. Then $D(A)$ contains no path from the vertices of $D(A_i)$ to the vertices of $D(A_j)$.}

To the contrary, suppose $D(A)$ contains a path  $u_1u_2\cdots u_t$  with $u_1\in \V(D(A_i))$ and $v_t\in\V(D(A_j))$.

Firstly, we prove that there is an arc from a vertex of $D(A_i)$ to a vertex of $D(A_j)$.
Note that $(u_{t-1},u_t)\in \A(D)$ implies that there is a $k$-walk from $u_{t-1}$ to $u_t$, which consists of the arc $(u_{t-1},u_t)$ and $(k-1)/n_j$ copies of the cycle $D(A_j)$. If $t\ge 3$, then by using the cycle $D(A_j)$ repeatedly, we can find a $k$-walk from $u_{t-2}$ to a vertex $v_1\in \V(D(A_j))$, which is the in-neighbour of $u_t$ in $D(A_j)$. Since $A^k=A$, we have $(u_{t-2},v_1)\in \A(D(A))$. If $t-2=1$, then  $(u_{t-2},v_1)$ is the arc we need. If $t-2>1$, repeating the above arguments we can always find an arc from $u_1$ to some vertex $v$ of $D(A_j)$.

Now assume $D(A)$ contains an arc $(u,v)$ with $u\in\V(D(A_i))$ and $v\in \V(D(A_j))$. Then we can find two $k$-walks from $u$ to $v$ in $D(A)$: one walk consisting  of $(k-1)/n_i$ copies of $D(A_i)$ and the arc $(u,v)$, and another walk consisting of the arc $(u,v)$ and $(k-1)/n_j$ copies of $D(A_j)$. Hence, we get $A^k\ne A$, a contradiction. This completes the proof of Claim 1.\\

Applying Claim 1, if $B$ is not the zero matrix, then it is permutation similar to
$$H=\left(\begin{array}{cccc}
0&X&Z\\0&P&Y\\0&0&0
\end{array}
\right), $$
where $P=\oplus_{i=1}^r C_{n_i}$ with  $n_i \mid k-1$ for $i=1,\ldots,r$, the diagonal zero block matrices are square, which may vanish.
By a direct computation, we have
 $$H^m=\left(\begin{array}{cccc}
0&XP^{m-1}&XP^{m-2}Y\\0&P^m&P^{m-1}Y\\0&0&0
\end{array}
\right).$$
On the other hand,  $P^{k}=P$ implies  $P^{k-1}=I$ and $P^{k-2}=P^T$. Therefore,  $$H^k=\left(\begin{array}{cccc}
0&X&XP^TY\\0&P&Y\\0&0&0
\end{array}\right).$$Now $A^k=A$ implies $H^k=H$, and hence $Z=XP^TY$. This completes the proof.
\end{proof}

When $k=2$, each $C_{n_i}$ in Theorem \ref{th3} is the 1-by-1 matrix 1. Hence, Ma's characterization of idempotent 0-1 matrix follows from Theorem \ref{th3} directly.
\begin{corollary}\cite{CM}
A square 0-1 matrix is  idempotent if and only if it is permutation similar to
$$\left(\begin{array}{ccc}
0&X&XY\\0&I&Y\\0&0&0
\end{array}
\right), $$
where the zero diagonal blocks are square and  may vanish.
\end{corollary}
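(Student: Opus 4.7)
The plan is to derive the corollary as the immediate $k = 2$ specialization of Theorem \ref{th3}.

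For the forward direction, I apply Theorem \ref{th3}: any idempotent 0-1 matrix $A$ is either $0$ or permutation similar to
$$\left(\begin{array}{ccc} 0 & X & XP^TY \\ 0 & P & Y \\ 0 & 0 & 0 \end{array}\right),$$
with $P = \bigoplus_{i=1}^r C_{n_i}$ and each $n_i$ dividing $k - 1 = 1$. The decisive observation is that $n_i \mid 1$ forces $n_i = 1$ for every $i$, so each block $C_{n_i}$ equals the $1 \times 1$ matrix $[1]$. Consequently $P = \bigoplus_{i=1}^r [1] = I$, and since $P^T = I$ as well, the corner block $XP^TY$ collapses to $XY$. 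This yields exactly the block form asserted in the corollary; the case $A = 0$ is absorbed by allowing the middle block (and, if needed, the side blocks) to vanish.

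For the converse, a direct block multiplication confirms that any matrix $H$ of the stated form is idempotent: the diagonal blocks yield $0$, $I^2 = I$, and $0$; the superdiagonal positions yield $X \cdot I = X$ and $I \cdot Y = Y$; and the $(1,3)$ block of $H^2$ equals $X \cdot Y + XY \cdot 0 = XY$. Hence $H^2 = H$, and idempotency is preserved under permutation similarity, completing the reverse implication.

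Since the corollary is a pure specialization, there is essentially no obstacle. The only subtlety worth flagging is that the requirement that $XY$ be a 0-1 matrix (implicit in the corollary, since the whole matrix is 0-1) is inherited directly from the corresponding 0-1 condition on $XP^TY$ in Theorem \ref{th3}, so no further hypothesis needs to be added.
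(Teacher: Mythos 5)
Your proposal is correct and follows exactly the paper's route: the paper likewise observes that $k=2$ forces each $C_{n_i}$ to be the $1\times 1$ matrix $1$, so $P=I$ and $XP^TY=XY$, and the corollary drops out of Theorem \ref{th3} directly. Your added converse verification by block multiplication is a harmless extra detail already subsumed by the ``if and only if'' in Theorem \ref{th3}.
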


\section{Maximum number of nonzero entries in $k$-idempotent 0-1 matrices}

 In this section, we determine the maximum number of nonzero entries in $k$-idempotent 0-1 matrices of a given order as well as the matrices attaining this maximum number.
Denote by $f(A)$ the number of nonzero entries in a 0-1 matrix $A$, $O_n$ the zero square matrix of order $n$,  and $J_{m,n}$ the $m\times n$ matrix with all entries equal to 1.
Given a positive integer $n$, we define
\begin{equation*}
\gamma(n)\equiv \begin{cases}
\frac{(n+1)^2}{4},&\text{if $n$ is odd,}\\
\frac{n^2+2n}{4},&\text{if $n$ is even}.
\end{cases}
\end{equation*} We need the following lemmas.
\begin{lemma}\label{le4}
Let $n\ge 2$ be an integer. Suppose $A\in M_n\{0,1\}$ has the form
$$\left(\begin{array}{cc}
O_r&X\\0&P
\end{array}\right)\quad {\rm or }\quad \left(\begin{array}{cc}
P&X\\0&O_r
\end{array}\right) $$
with $P$ being a permutation matrix. Then
$$
f(A)\le \gamma(n)
$$
  with equality   if and only if $X$ is a matrix with all entries equal to 1,  $r=(n-1)/2$ when $n$ is odd and $r\in\{n/2,n/2-1\}$ when $n$ is even.
\end{lemma}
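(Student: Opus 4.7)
The plan is to parametrize $A$ by $r$ and $s := n-r$, bound $f(A)$ by a single-variable function of $r$, and optimize. In either displayed form, $P$ is an $s \times s$ permutation matrix contributing exactly $s$ ones, while $X$ is an $r \times s$ 0-1 block contributing at most $rs$ ones. Hence
$$ f(A) \;\le\; s + rs \;=\; (n-r)(r+1) \;=:\; g(r), $$
with equality precisely when $X = J_{r,s}$ (the permutation matrix contribution is forced to be $s$ regardless).

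Next I would maximize $g(r) = -r^2 + (n-1)r + n$ over integers $r$ with $0 \le r \le n$. This is a downward parabola with real maximizer $r^\ast = (n-1)/2$, so the integer optimum splits on parity. If $n$ is odd, $r^\ast$ is itself an integer and
$$ g(r^\ast) \;=\; \Bigl(\tfrac{n+1}{2}\Bigr)\Bigl(\tfrac{n+1}{2}\Bigr) \;=\; \tfrac{(n+1)^2}{4} \;=\; \gamma(n), $$
and strict concavity gives uniqueness. If $n$ is even, the two nearest integers $r = n/2-1$ and $r = n/2$ both yield
$$ g(r) \;=\; \tfrac{n}{2}\cdot \tfrac{n+2}{2} \;=\; \tfrac{n^2+2n}{4} \;=\; \gamma(n), $$
while every other integer $r$ gives a strictly smaller value, again by concavity.

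Combining the two steps gives $f(A) \le \gamma(n)$, with equality iff both $X = J_{r,s}$ and $r$ takes one of the stated optimal values, matching the claim exactly. I do not expect any real obstacle here: the symmetry between the two block forms in the statement (they differ only by reflection across the anti-diagonal, which preserves entry counts) lets one handle them simultaneously, and the remainder is a routine integer optimization of a concave quadratic with a careful equality-case bookkeeping.
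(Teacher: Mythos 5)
Your proof is correct and follows essentially the same route as the paper's: both bound $f(A)=f(P)+f(X)\le (n-r)+r(n-r)=(r+1)(n-r)$ and then optimize this concave quadratic over integer $r$, with the parity split giving $\gamma(n)$ and the stated equality cases. You simply spell out the integer optimization that the paper dismisses with ``the result follows directly.''
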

\begin{proof}
Note that
\begin{eqnarray*}
f(A)=f(P_s)+f(X)
\le (n-r)+r(n-r)=(r+1)(n-r).
\end{eqnarray*}  The result follows directly.
\end{proof}
Given a block matrix $\left(\begin{array}{ccc}
O_r&A\\B&C
\end{array}\right)$, if $r=0$, then both the first  row  and the first column vanish, i.e., the matrix is $C$.
\begin{lemma}
Let $n\ge 3$ be an integer. Suppose $A\in M_n\{0,1\}$ has the form
\begin{equation}\label{eqh1}
\left(\begin{array}{ccc}
O_r&X&Z\\0&P&Y\\0&0&O_s
\end{array}\right)
\end{equation}
  with $P$ being a permutation matrix, $r\ge 0$, $s\ge 0$. If $A^2$ is  a 0-1 matrix, then
$$f(A)\le \gamma(n)$$
with equality if and only if one of the following holds.
\begin{itemize}
    \item[(a)] $r=(n-1)/2$ when $n$ is odd and $r\in\{n/2,n/2-1\}$ when $n$ is even; $X$ and $Z$ are matrices with all entries equal to 1; each column of $Y$ has exactly one nonzero entry.
    \item[(b)] $s=(n-1)/2$ when $n$ is odd and $s\in\{n/2,n/2-1\}$ when $n$ is even; $Y$ and $Z$ are matrices with all entries equal to 1; each row of $X$ has exactly one nonzero entry.
\end{itemize}
\end{lemma}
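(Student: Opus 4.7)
My plan is to reduce the problem to the single structural constraint that $XY$ be a $0$-$1$ matrix, bound $f(X)+f(Y)$ by a combinatorial inequality on the supports of the rows of $X$ and columns of $Y$, and then optimize over $r,s,m$. Let $m = n-r-s$ denote the order of $P$. Block multiplication gives
$$A^2=\left(\begin{array}{ccc} 0 & XP & XY \\ 0 & P^2 & PY \\ 0 & 0 & 0 \end{array}\right),$$
and since $P$ is a permutation matrix $XP$, $P^2$, $PY$ are automatically $0$-$1$; thus the hypothesis that $A^2$ is $0$-$1$ is equivalent to $XY$ being $0$-$1$. Also $f(A)=f(X)+f(Y)+f(Z)+m$, and since no constraint touches $Z$ we have $f(Z)\le rs$, with equality iff $Z=J_{r,s}$.

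For the central estimate I would encode the supports: set $S_i=\{k:X_{ik}=1\}\subseteq\{1,\dots,m\}$ for $i=1,\dots,r$ and $T_j=\{k:Y_{kj}=1\}\subseteq\{1,\dots,m\}$ for $j=1,\dots,s$. Then $(XY)_{ij}=|S_i\cap T_j|$, so the $0$-$1$ hypothesis reads $|S_i\cap T_j|\le 1$. Combined with $|S_i\cup T_j|\le m$, inclusion--exclusion yields
$$|S_i|+|T_j|=|S_i\cup T_j|+|S_i\cap T_j|\le m+1.$$
Summing over all pairs $(i,j)$ gives the aggregate inequality $sf(X)+rf(Y)\le rs(m+1)$. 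Paired with the trivial bounds $f(X)\le rm$ and $f(Y)\le ms$, a short vertex enumeration of the resulting two-variable linear program yields
$$f(X)+f(Y)\le \max(rm+s,\ r+ms).$$

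Adding back $f(Z)+m\le rs+m$ gives
$$f(A)\le \max\bigl((r+1)(m+s),\ (s+1)(m+r)\bigr)=\max\bigl((r+1)(n-r),\ (s+1)(n-s)\bigr),$$
and since the integer quadratic $g(t)=(t+1)(n-t)$ attains its maximum $\gamma(n)$ at $t=(n-1)/2$ for odd $n$ and at $t\in\{n/2-1,n/2\}$ for even $n$, we conclude $f(A)\le\gamma(n)$. For equality I would trace backwards: if the bound is realized via the first expression, then $(r+1)(n-r)=\gamma(n)$ pins $r$ as in the statement, we need $f(Z)=rs$ (forcing $Z=J_{r,s}$), $f(X)=rm$ (forcing $X=J_{r,m}$), and $f(Y)=s$; but with $X$ all-ones the $0$-$1$ condition on $XY$ is equivalent to every column sum of $Y$ being at most $1$, so $f(Y)=s$ forces each column of $Y$ to contain exactly one $1$. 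This is case (a); case (b) is the symmetric conclusion when the maximum is realized at the other LP vertex $(r,ms)$.

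The crux of the argument is the combinatorial inequality $|S_i|+|T_j|\le m+1$ and its aggregation to $sf(X)+rf(Y)\le rs(m+1)$, which supplies the one non-trivial constraint cutting the naive rectangular region down to the correct polytope. Everything else --- the block multiplication, the LP vertex enumeration, and the quadratic optimization --- is routine. Edge cases ($r=0$, $s=0$, or $m=0$) either collapse into a vacuous instance of case (a) or (b), or degrade the bound strictly (for example $m=0$ forces $f(A)\le rs\le\lfloor n^2/4\rfloor<\gamma(n)$ for $n\ge 3$), and can be dispatched separately.
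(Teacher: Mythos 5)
Your route is genuinely different from the paper's: the paper proves this lemma by induction on $n$, peeling off the first and last row/column, using $A^2(1,n)\le 1$ to bound the stripped border by $f(u)+f(v)+z\le n$ and invoking the inductive hypothesis on the inner $(n-2)\times(n-2)$ block (then using $A^2(2,n)\le 1$ or $A^2(1,n-1)\le 1$ to finish the equality case). Your direct argument — reducing the hypothesis to ``$XY$ is $0$-$1$'', i.e.\ $|S_i\cap T_j|\le 1$, deducing $|S_i|+|T_j|\le m+1$ by inclusion--exclusion, aggregating to $sf(X)+rf(Y)\le rs(m+1)$, and optimizing — is shorter, avoids induction, and makes the source of the bound transparent. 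I checked the block multiplication, the LP vertex computation ($\max(rm+s,\,r+ms)$ for $m\ge 1$), the identities $rm+s+rs+m=(r+1)(n-r)$ and $r+ms+rs+m=(s+1)(n-s)$, and the optimization of $(t+1)(n-t)$; the inequality $f(A)\le\gamma(n)$ is fully established, and the edge cases are correctly dispatched.

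There is, however, a gap in your equality analysis. You assume that $f(X)+f(Y)$ attaining the LP maximum forces $(f(X),f(Y))$ to be one of the two vertices $(rm,s)$ or $(r,ms)$. But the objective $x+y$ is parallel to the aggregate constraint $sx+ry\le rs(m+1)$ precisely when $r=s$, in which case the entire edge joining those two vertices is optimal and the LP pins down nothing beyond $f(X)+f(Y)=rm+s$. This degeneracy actually occurs among the equality configurations: for even $n$ you must handle $r=s=n/2-1$, $m=2$, where the LP tolerates, say, $f(X)=f(Y)=3r/2$, which satisfies neither (a) nor (b). To close this you must go back to the sets: if $m\ge 2$ and some $S_i=\{1,\dots,m\}$ while some $T_j=\{1,\dots,m\}$, then $|S_i\cap T_j|\ge 2$, so either every row of $X$ or every column of $Y$ has at most one nonzero entry; feeding that stronger bound ($f(X)\le r$ or $f(Y)\le s$) back into $f(X)+f(Y)=rm+s$ forces one of the two vertex configurations and hence (a) or (b). (The case $m=1$ is harmless because the two vertices coincide at $(r,s)$.) With that one extra observation, plus the routine verification that (a) and (b) are indeed sufficient for equality (which you omit), the proof is complete.
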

\begin{proof}
We apply a similar strategy as in the proof of [10, Theorem 2]. We use induction on the order $n$. If $r=0$ or $s=0$, then the result follows from Lemma \ref{le4}. Now suppose $r\ge 1$ and $s\ge 1$.

If $n=3$, then
$$A=\left(\begin{array}{ccc}
0 &x&z\\
0&1&y\\0&0&0
\end{array}\right)$$
and $f(A)$ attains the maximum if and only if $x=y=z=1$. Hence the result holds for $n=3$.

If $n=4$, then we have three possibilities: $r=1,s=2,P=1$; $r=2,s=1,P=1$; $r=s=1$, $P\in M_2\{0,1\}$, which means $A$ has the following forms
$$\left(\begin{array}{cccc}
0 &x&y&z\\
0&1&u&v\\0&0&0&0\\0&0&0&0
\end{array}\right),\quad
\left(\begin{array}{cccc}
0 &0&u&x\\
0&0&v&y\\0&0&1&z\\0&0&0&0
\end{array}\right),\quad\left(\begin{array}{ccc}
0 &\alpha^T&x\\
0&P&\beta\\0&0&0
\end{array}\right). $$
In the first two cases, it is obvious that $f(A)$ attains the maximum if and only if $x=y=z=u=v=1$. In the last case, note that $A^2$ being a 0-1 matrix implies
$f(\alpha)+f(\beta)\le 3.$
 Thus $f(A)\le 6$ with equality if and only if $x=1$ and $
f(\alpha)+f(\beta)= 3.$ Therefore, the result holds for $n=4$.

Now we assume $n\ge 5$ and the result holds for  0-1 matrices of order less than $n$. Suppose $A\in M_n\{0,1\}$ has form (1) and $A^2$ is a 0-1 matrix. Partition $A$ as
$$A=\left(\begin{array}{ccccc}
0&0&u_1^T&u_2^T&z\\
0&O_{r-1}&X_1&Z_1&v_1\\
0&0&P&Y_1&v_2\\
0&0&0&O_{s-1}&0\\
0&0&0&0&0
\end{array}\right)\equiv\left(\begin{array}{cccc}
0&u^T&z\\
0&B&v\\0&0&0
\end{array}\right)$$
where $u$ and $v$ are 0-1 column vectors.
Note that $A^2(1,n)\le 1$ implies
\begin{equation}\label{eqh2}
f(u)+f(v)+z\le n
\end{equation}
 with equality  if and only if
 \begin{equation}\label{eqh3}
 z=1, v_1=J_{r-1,1}, u_2=J_{s-1,1}\quad f(u_1)+f(v_2)=n-r-s+1,\quad u^Tv=u_1^Tv_2=1.
 \end{equation}
We have
\begin{eqnarray*}
f(A)=f(B)+f(u)+f(v)+z
     \le\gamma(n-2)+n=\gamma(n).
\end{eqnarray*}
Moreover, $f(A)=\gamma(n)$ if and only if $f(B)=\gamma(n-2)$ and equality in (\ref{eqh2}) holds. By the induction hypothesis,
we have (\ref{eqh3}) and one of the following holds.
\begin{itemize}
    \item[(i)] $r-1=(n-3)/2$ when $n$ is odd and $r-1\in\{(n-2)/2,(n-2)/2-1\}$ when $n$ is even; $X_1$ and $Z_1$ are matrices with all entries equal to 1; each column of $Y_1$ has exactly one nonzero entry.
    \item[(ii)] $s-1=(n-3)/2$ when $n$ is odd and $s-1\in\{(n-2)/2,(n-2)/2-1\}$ when $n$ is even; $Y_1$ and $Z_1$ are matrices with all entries equal to 1; each row of $X_1$ has exactly one nonzero entry.
\end{itemize}
If (i) holds, then
$$A=\left(\begin{array}{ccccc}
0&0&u_1^T&J_{1,s-1}&1\\
0&O_{r-1}&J_{r-1,t}&J_{r-1,s-1}&J_{r-1,1}\\
0&0&P&Y_1&v_2\\
0&0&0&O_{s-1}&0\\
0&0&0&0&0
\end{array}\right) $$
where $t=n-r-s$ and $f(u_1)+f(v_2)=t+1$. Considering $A^2(2,n)\le 1$ we have $f(v_2)=1$ and $u_1=J_{1,t}$. Therefore, $A$ satisfies statement (a).

If (ii) holds, then
$$A=\left(\begin{array}{ccccc}
0&0&u_1^T&J_{1,s-1}&1\\
0&O_{r-1}&X_1&J_{r-1,s-1}&J_{r-1,1}\\
0&0&P&J_{t,s-1}&v_2\\
0&0&0&O_{s-1}&0\\
0&0&0&0&0
\end{array}\right) $$
where $t=n-r-s$ and $f(u_1)+f(v_2)=t+1$. Considering $A^2(1,n-1)\le 1$ we have $f(u_1)=1$ and $v_2=J_{t,1}$. Therefore, $A$ satisfies statement (b).

On the other hand, if $A$ has form (\ref{eqh1}) with (a) or (b) holding, then it is easy to check that $A^2\in M_n\{0,1\}$ and $f(A)=\gamma(n)$.
This completes the proof.
\end{proof}
\begin{theorem}
Let $A\in M_n\{0,1\}$ be  $k$-idempotent. Then
$$f(A)\le\gamma(n)$$
    with equality if and only if $A$ is permutation similar to a matrix with form (\ref{eqh1}) such that statement (a) or (b) in Lemma \ref{le4} holds, and $P=\oplus_{i=1}^t C_{n_i}$ with  $n_i \mid k-1$ for $i=1,\ldots,t$.
\end{theorem}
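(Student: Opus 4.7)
The plan is to reduce the $k$-idempotent case to the idempotent case already treated in the preceding lemma, via an auxiliary-matrix trick. By Theorem \ref{th3}, after a permutation similarity we may assume either $A = 0$, in which case $f(A)\le\gamma(n)$ is trivial, or
$$A = \left(\begin{array}{ccc} 0 & X & XP^TY \\ 0 & P & Y \\ 0 & 0 & 0 \end{array}\right),$$
where $P = \oplus_{i=1}^t C_{n_i}$ with $n_i \mid k-1$ is of order $m = n-r-s$. Since each $C_{n_i}$ is a permutation matrix, so are $P$ and $P^T$. The $(1,3)$ block $XP^TY$ is 0-1 by hypothesis.

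Next I introduce the auxiliary matrix
$$\widetilde A = \left(\begin{array}{ccc} 0 & X & XP^TY \\ 0 & I_m & P^TY \\ 0 & 0 & 0 \end{array}\right).$$
A direct block multiplication shows $\widetilde A^2 = \widetilde A$, so $\widetilde A$ is an idempotent 0-1 matrix; in particular $\widetilde A^2 \in M_n\{0,1\}$. Since $P^T$ is a permutation matrix, $f(P^TY) = f(Y)$, and $f(I_m) = m = f(P)$, giving $f(\widetilde A) = f(A)$. The middle block $I_m$ of $\widetilde A$ is a permutation matrix and $\widetilde A^2$ is 0-1, so the preceding lemma applies to $\widetilde A$ and yields $f(A) = f(\widetilde A) \le \gamma(n)$.

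For the equality case, the preceding lemma says $f(\widetilde A) = \gamma(n)$ iff $\widetilde A$ satisfies its condition (a) or (b). Since $P^T$ merely permutes rows of $Y$, the statements ``$P^TY$ is all ones'' and ``each column (resp.\ row) of $P^TY$ has exactly one nonzero entry'' are equivalent to the corresponding statements for $Y$. Hence the equality conditions translate verbatim to $A$ (expressed through $X$, $Y$, and $XP^TY$), and combined with the divisibility $n_i \mid k-1$ inherited from Theorem \ref{th3}, they give the claimed characterization. The converse is a routine count: in case (a), $f(A) = f(X) + f(P) + f(Y) + f(XP^TY) = rm + m + s + rs = (r+1)(n-r) = \gamma(n)$, with the appropriate value of $r$, and case (b) is symmetric.

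The conceptual heart of the argument is the observation that replacing $P$ by $I_m$ and $Y$ by $P^TY$ both preserves the entry count and produces an idempotent matrix with the \emph{same} $(1,3)$ block, so the 0-1 constraint on $XP^TY$ (which is the delicate part of Theorem \ref{th3}) automatically becomes the $A^2 \in M_n\{0,1\}$ hypothesis required by the preceding lemma. The main step requiring care is verifying $\widetilde A^2 = \widetilde A$ cleanly and then tracking the two equality conditions back through the substitution $Y \leftrightarrow P^TY$; once that bookkeeping is done, no further combinatorial estimation is needed beyond what the preceding lemma already provides.
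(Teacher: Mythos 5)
Your proof is correct and follows essentially the same route as the paper: both reduce to the preceding lemma by passing to an auxiliary matrix with the same number of ones, middle block a permutation matrix, $(2,3)$-block $P^TY$, and square a 0-1 matrix (the paper keeps $P$ in the middle block, you replace it by $I_m$ to make the auxiliary matrix genuinely idempotent, but this changes nothing in how the lemma is invoked). The translation of the equality conditions back through $Y \leftrightarrow P^TY$ and the closing count are handled as in the paper.
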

\begin{proof}
By Theorem \ref{th3}, $A$ is permutation to a matrix with form (\ref{eqh1}) such that $Z=XP^TY$, and $P=\oplus_{i=1}^t C_{n_i}$ with  $n_i \mid k-1$ for $i=1,\ldots,t$. Consider the 0-1 matrix
$$B=\left(\begin{array}{ccc}
O_r&X&Z\\0&P&P^TY\\0&0&O_s
\end{array}\right).$$
We have $$B^2=\left(\begin{array}{ccc}
O_r&X&Z\\0&P^2&P^TY\\0&0&O_s
\end{array}\right)\in M_n\{0,1\}.$$
Applying Lemma \ref{le4}, we get
$$f(A)=f(B)\le \gamma(n)$$
with equality if and only if one of the following holds.
\begin{itemize}
    \item[(i)] $r=(n-1)/2$ when $n$ is odd and $r\in\{n/2,n/2-1\}$ when $n$ is even; $X$ and $Z$ are matrices with all entries equal to 1; each column of $P^TY$ has exactly one nonzero entry.
    \item[(ii)] $s=(n-1)/2$ when $n$ is odd and $s\in\{n/2,n/2-1\}$ when $n$ is even; $P^TY$ and $Z$ are matrices with all entries equal to 1; each row of $X$ has exactly one nonzero entry.
\end{itemize}
This completes the proof.
\end{proof}
\section*{Acknowledgement}

The first author was supported by a Natural Science Fund of Shenzhen University.
The second author was supported by National Natural Science Foundation of China (No. 11771141)

\end{document}